%
%
%
%
%
%

\documentclass[11pt]{amsart}

\usepackage{amssymb}
\usepackage{hyperref}
\usepackage{multicol}
\usepackage{colordvi}
\usepackage{graphicx}
\usepackage{pgf,epsfig, epsf}
%
%


\addtolength{\oddsidemargin}{-.5in}
	\addtolength{\evensidemargin}{-.5in}
	\addtolength{\textwidth}{1in}

	\addtolength{\topmargin}{-.5in}
	\addtolength{\textheight}{1in}



\numberwithin{equation}{section}
\theoremstyle{plain}
\newtheorem{Theorem}{Theorem}[section]
\newtheorem{corollary}[Theorem]{Corollary}
\newtheorem{proposition}[Theorem]{Proposition}

\theoremstyle{definition}
\newtheorem{remark}[Theorem]{Remark}
\newtheorem{example}[Theorem]{Example}
\newtheorem{definition}[Theorem]{Definition}


\copyrightinfo{}{}
\newcounter{FNC}[page]
\def\fauxfootnote#1{{\addtocounter{FNC}{2}\Magenta{$^\fnsymbol{FNC}$}%
     \let\thefootnote\relax\footnotetext{\Magenta{$^\fnsymbol{FNC}$#1}}}}

\newcommand{\conv}{\operatorname{conv}}

\newcommand{\w}{\operatorname{w}}
\newcommand{\nls}{\operatorname{nls_\Delta}}
\newcommand{\ls}{\operatorname{ls_\Delta}}
\newcommand{\lss}{\operatorname{ls_\square}}




\title{Bounds on Area Involving Lattice Size}


\author{Jenya Soprunova}
\address{Department of Mathematical Sciences\\
        Kent State University\\
        800 E. Summit st., Kent, OH 44242, USA}
\email{esopruno@kent.edu}
\urladdr{http://www.math.kent.edu/~soprunova/}

\subjclass[2010]{52B20, 52C05, 11H06}
\keywords{Lattice size, lattice width, lattice polygons, classification, generalized basis reduction.}


\begin{document}

\maketitle

\begin{abstract} The lattice size of a lattice polygon $P$ is a combinatorial invariant of $P$ that was recently   introduced  in relation to the problem of bounding the total degree and the bi-degree of the defining equation of an algebraic curve. In this paper, we establish sharp lower bounds on the area of plane convex bodies $P\subset\mathbb{R}^2$ that involve the lattice size of $P$. 
In particular, we improve bounds given by Arnold, and B\'ar\'any and Pach. We also provide a classification of minimal lattice polygons $P\subset\mathbb{R}^2$ of fixed lattice size $\lss(P)$.
\end{abstract}

\section{Introduction}
This paper is devoted to providing sharp lower bounds on the area of plane convex bodies $P$,  which involve the lattice size of $P$.
This invariant was formally introduced by Schicho, and Kastryck and Cools in~\cite{CasCools,Schicho}, although it had appeared implicitly earlier  in the work of Arnold~\cite{Arnold}, B\'ar\'any and Pach~\cite{BarPach}, Brown and Kaspzyck~\cite{BrownKasp}, and Lagarias and Ziegler~\cite{LagZieg}. The lattice size was further studied in~\cite{AlSopr, REU, HarSopr},  and~\cite{HarSoprTier}.

We next reproduce the definition of the lattice size from~\cite{CasCools} applying it now to a plane convex body $P$.  
\vspace{.2cm}
 \begin{definition}~\label{D:ls_intro} The {\it lattice size} $\operatorname{ls}_X(P)$ of  a convex body $P\subset\mathbb{R}^2$ with respect to a set $X\subset\mathbb{R}^2$  is the smallest  real non-negative $l$ such that 
 $\varphi(P)$ is contained in the $l$-dilate $lX$ of $X$ for some  transformation $\varphi$, which a combination of multiplication by a unimodular matrix and a translation by an integer vector. 
 \end{definition}

When $X=[0,1]\times\mathbb{R}$, the lattice size of $P$ with respect to $X$ coincides with the lattice width $\w(P)$ of $P$, an important invariant in convex geometry and its applications. Two other interesting 
invariants of $P$, denoted by $\ls(P)$ and $\lss(P)$, arise when $X$ is  the standard 2-simplex $\Delta=\conv\{(0,0), (1,0), (0,1)\}$ or the unit square $\square=[0,1]^2$.

It was shown in~\cite{HarSopr, HarSoprTier}  that in dimension 2 
a so-called reduced basis  computes both $\lss(P)$ and $\ls(P)$, and that in dimension 3 it computes $\lss(P)$, but not necessarily $\ls(P)$.
One can then use the generalized basis reduction algorithm, described and analyzed in~\cite{HarSopr,KaibSchnorr,LovScarf} 
to find the lattice size of a lattice polygon, which, as explained in~\cite{HarSopr, HarSoprTier}, outperforms  the ``onion skins'' algorithm of~\cite{CasCools, Schicho}. See Definition~\ref{D:reduced_basis} and Theorem~\ref{T:reduced_computes} for the definition of a reduced basis and for the precise formulation of the described results from~\cite{HarSopr} and~\cite{HarSoprTier}.

One of the questions that we address in this paper is the following: What is the smallest  possible nonzero area $A(P)$ of a lattice polygon $P$ of fixed lattice size $\lss(P)$ or $\ls(P)$? 
In Theorem~\ref{T:ls(P)2A} we prove a sharp bound $A(P)\geq \frac{1}{2}\ls(P)$   and describe the lattice polygons  on which this bound is attained. Since $\ls(P)\geq \lss(P)$ it follows that $A(P)\geq \frac{1}{2}\lss(P)$, and we show in Corollary~\ref{C:lss(P)2A} that this bound is sharp.

In the last section of the paper we  classify  inclusion-minimal lattice polygons $P$ with fixed lattice size  $\lss(P)$, see Theorem~\ref{T:minimal}. This classification provides an alternative proof of Corollary~\ref{C:lss(P)2A}.
Note that a classification of inclusion-minimal lattice polygons $P$ with fixed lattice width $\w(P)$ was provided in~\cite{CoolsLemmens}.

In both of the above bounds it is crucial that $P$ is a lattice polygon, since a plane convex body $P$ of fixed lattice size $\ls(P)$ or $\lss(P)$ may have an arbitrarily small area.
Hence, in the case of plane convex bodies,  it makes sense to look for lower bounds on the area that involve one of  the lattice sizes, $\ls(P)$ or $\lss(P)$, together with the lattice width $\w(P)$ of $P$.
In the case of the lattice size with respect to the unit square, such a bound was essentially proved by Fejes-T\'oth and Makai in~\cite{TothMakai}, where they showed  that for a plane convex body $P$ one has 
$A(P)\geq \frac{3}{8}{\w(P)}^2$, and that this bound is attained at $\conv\{(0,0), \left(w, \frac{w}{2}\right), \left(\frac{w}{2},w\right)\}$, where $w=\w(P)$. By a simple rescaling argument we establish in Theorem~\ref{T:whbound} a sharp bound $A(P)\geq\frac{3}{8}\w(P) \lss(P)$.

In our main result, Theorem~\ref{T:wlbound}, we establish a version of the Fejes-T\'oth--Makai result where we bound the area in terms of $\ls(P)$ and $\w(P)$. 
For  a plane convex body  $P$ we show that $A(P)\geq \frac{1}{4}\w(P)\ls(P)$ and describe convex bodies on which this bound is attained.  

The idea of inscribing a lattice polygon inside a small multiple of the unit square had appeared in~\cite{Arnold,BarPach, BrownKasp,LagZieg}, before the lattice size was introduced formally.
Both~\cite{Arnold} and~\cite{BarPach} are devoted to estimating the order of the number of lattice polygons of given area, up to the lattice equivalence, with~\cite{BarPach} improving the result of~\cite{Arnold}.  
It is shown in one of the steps of the argument in~\cite{Arnold} that for any lattice convex polygon $P\subset\mathbb{R}^2$ of nonzero area $A(P)$ there exists its lattice-equivalent copy inside a square of size $36A(P)$. In terms of the lattice size this means that $\lss(P)\leq 36A(P)$. Hence our result in Corollary~\ref{C:lss(P)2A} improves Arnold's bound from~\cite{Arnold} to a sharp one replacing a constant of 36 with 2.

A  similar result is proved  in \cite[Lemma 3]{BarPach}: For a convex lattice polygon $P$ with nonzero area $A(P)$  there exists a lattice-equivalent copy of $P$ inside a rectangle $[0,w]\times[0,h]$ with $wh<4 A(P)$.
In Theorem~\ref{T:whbound} we improve the bound of B\'ar\'any and Pach to a  sharp bound $A(P)\geq\frac{3}{8}\w(P) \lss(P)$. Note that our result from Theorem~\ref{T:ls(P)2A} can also be reformulated in  the spirit of~\cite{Arnold, BarPach}: For any lattice polygon $P$ of nonzero area $A(P)$ there exists a lattice-equivalent copy of $P$ contained in $2A(P)\Delta$.

%

\section{Definitions}
Recall that a {\it plane convex body} $P\subset\mathbb{R}^2$ is a  compact convex subset of $\mathbb{R}^2$ with non-empty interior. 
Given $(a,b)\in\mathbb{R}^2$, {\it the width of $P$ in the direction $(a,b)$} is 
$$\w_{(a,b)}(P)=\max\limits_{(x,y)\in P} (ax+by)-\min\limits_{(x,y)\in P} (ax+by).
$$
Consider the Minkowski sum of $P$ with $-P$, its reflection in the origin, and let $K:=(P+(-P))^{\ast}$ be the polar dual of the sum.
Then $K$ is origin-symmetric and convex and it defines a norm on $\mathbb{R}^2$ by
$$\Vert u\Vert _K=\inf\{\lambda>0\mid u/\lambda \in K\}.$$
For details see, for example, \cite{Barvinok}. We then have
$$\Vert u\Vert_K =\inf\{\lambda>0\mid u\cdot x\leq\lambda {\rm\ for\ all\ }  x\in K^\ast\}=\max\limits_{x\in K^{\ast}} u\cdot x =\frac{1}{2}\w_u(K^{\ast})=\w_u(P).
$$
This, in particular, implies that $u\mapsto \w_u(P)$ is a convex function on $\mathbb{R}^2$.

Recall that a vector $u=(a,b)\in\mathbb{Z}^2$ is called {\it primitive} if $\gcd(a,b)=1$. The {\it lattice width} of $P$, denoted by $\w(P)$, is the minimum of $\w_{u}(P)$ over all primitive directions $u$. 
 
A  {\it lattice polygon} is a convex polygon all of whose vertices have integer coordinates. 
  An integer square matrix $A$ is called {\it unimodular} if $\det A=\pm 1$. 
 Two convex bodies in $\mathbb{R}^2$ are called {\it lattice-equivalent} if one of them is the image of the other under a map which is a composition
 of multiplication by a unimodular matrix and a translation by an integer vector.

 Let $\Delta=\conv\{(0,0), (1,0), (0,1)\}\subset\mathbb{R}^2$ be the standard 2-simplex. 
 
 \begin{definition} The {\it lattice size} $\ls(P)$ of a convex body $P\subset\mathbb{R}^2$ with respect to the standard simplex  is the smallest $l\geq 0$ such that 
 the $l$-dilate $l\Delta$  contains a lattice-equivalent copy of $P$.
 \end{definition}
 
 Let $\square=[0,1]^2\subset\mathbb{R}^2$ be the unit square. 
 
 \begin{definition} The {\it lattice size} $\lss(P)$ of a convex body $P\subset\mathbb{R}^2$ with respect to the unit square   is the smallest $l\geq 0$ such that 
 the $l$-dilate $l\square$  contains a lattice-equivalent copy of~$P$.
 \end{definition}

 \begin{figure}[h]
\begin{center}
\includegraphics[scale=.7]{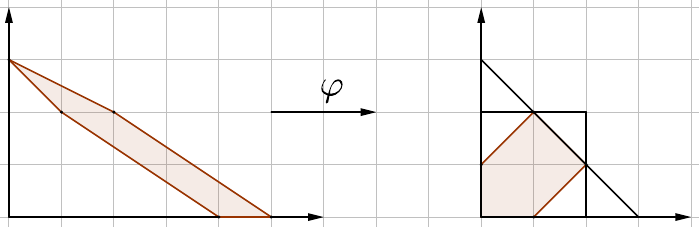} 
\caption{Example~\ref{E:lattice-size}}
\label{F:ls_example} 
\end{center}
\end{figure}

  \begin{example}\label{E:lattice-size} 
  Consider  $P=\conv\{(4,0), (5,0), (2,2), (0,3), (1,2)\}$, depicted in Figure~\ref{F:ls_example}. Let 
 $\varphi(x,y)=\begin{bmatrix}1&1\\-1&-2\end{bmatrix}\cdot \begin{bmatrix}x\\ y\end{bmatrix}+\begin{bmatrix}-3\\ 6\end{bmatrix}$. Then 
 $$\varphi(P)=\conv\{(1,2), (2,1), (1,0), (0,0), (0,1)\}.$$ Hence  $\varphi(P)\subset 2\square$ and
 we conclude that $\lss(P)=2$. Also,
 since $\varphi(P)\subset 3\Delta$ and $P$ has an interior lattice point while $2\Delta$ does not, it is impossible to unimodularly map $P$ inside $2\Delta$, and hence we have $\ls(P)=3$. 
  \end{example}

  \begin{definition}\label{D:reduced_basis} A basis $(u_1, u_2)$ of the integer lattice $\mathbb{Z}^2\subset\mathbb{R}^2$ is called {\it reduced} with respect to a convex body $P\subset\mathbb{R}^2$ if 
 $\w_ {u_1}(P)\leq \w_{u_2}(P)$ and  $\w_{u_1\pm u_2}(P)\geq \w_{u_2}(P)$.
 \end{definition}
 
 A fast algorithm for finding a reduced basis with respect to a convex body $P\subset\mathbb{R}^2$ was given in~\cite{KaibSchnorr}.
 It was shown in~\cite{HarSopr} and~\cite{HarSoprTier} that if the standard basis is reduced then one can easily find $\lss(P)$ and $\ls(P)$, as we summarize in Theorem~\ref{T:reduced_computes} below.
 
 \begin{definition} Let $P\subset\mathbb{R}^2$ be a plane convex body. We define $\nls(P)$ to be the smallest $l\geq 0$ such that $\varphi(P)\subset l\Delta$ where $\varphi$ is the composition of a multiplication by a matrix of the form 
 $\begin{bmatrix}\pm 1&0\\0&\pm 1\end{bmatrix}$ and a translation by an integer vector. Equivalently, if we let
 \begin{eqnarray*}
l_1(P)&:=&\max\limits_{(x,y)\in P}(x+y)-\min\limits_{(x,y)\in P}x-\min\limits_{(x,y)\in P}y,\\
l_2(P)&:=&\max\limits_{(x,y)\in P} x+\max\limits_{(x,y)\in P}y-\min\limits_{(x,y)\in P}(x+y),\\
l_3(P)&:=&\max\limits_{(x,y)\in P} y-\min\limits_{(x,y)\in P}x +\max\limits_{(x,y)\in P} (x-y),\\
l_4(P)&:=&\max\limits_{(x,y)\in P} x-\min\limits_{(x,y)\in P}y+\max\limits_{(x,y)\in P}(y-x),
\end{eqnarray*}
then $\nls(P)$ is the smallest of the four $l_i(P)$.
\end{definition}

\begin{example}\label{E:ex-l_i}
Let $P=\conv\{(0,0), (0,3), (2,2), (1,3)\}$, depicted in Figure~\ref{F:def-l_i}.  Then $l_1(P)=4$, $l_2(P)=5$, $l_3(P)=3$, and $l_4(P)=5$. Hence $\nls(P)=l_3(P)=3$.
 \begin{figure}[h]
\begin{center}
\includegraphics[scale=.7]{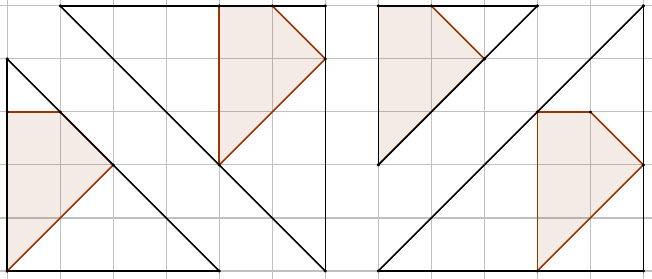} 
\caption{Example~\ref{E:ex-l_i}}
\label{F:def-l_i} 
\end{center}
\end{figure}
\end{example}

 \begin{Theorem}\label{T:reduced_computes}~\cite{HarSopr, HarSoprTier} Let $P\subset \mathbb{R}^2$ be a convex body. If the standard basis is reduced with respect to $P$, that is, $\w_{(1,0)}(P)\leq \w_{(0,1)}(P)$ and 
 $\w_{(1,\pm 1)}\geq \w_{(0,1)}(P)$, then $\ls(P)=\nls(P)$, $\lss(P)=\w_{(0,1)}(P)$, and $\w(P)=\w_{(1,0)}(P)$
  \end{Theorem}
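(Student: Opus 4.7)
The plan is to prove the two equalities separately, working from the reformulations
\[
\lss(P)=\min_{(v_1,v_2)\text{ basis of }\Z^2}\max\!\bigl(\w_{v_1}(P),\w_{v_2}(P)\bigr),\qquad
\ls(P)=\min_{(v_1,v_2)}g(v_1,v_2),
\]
where $g(v_1,v_2):=h(v_1+v_2)+h(-v_1)+h(-v_2)$ and $h=h_P$ is the support function of $P$. Both identities come from writing an affine unimodular $\varphi$ as a linear part (a basis of $\Z^2$) composed with an integer translation, and observing that once the basis is fixed the optimal translation yields the displayed quantity. In particular, the four $l_i(P)$ in the definition of $\nls(P)$ are exactly the values $g(\pm e_1,\pm e_2)$ for the four matching sign patterns.

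For $\lss(P)=\w_{(0,1)}(P)$, the upper bound comes from taking $\varphi$ to be (essentially) the identity: the coordinate widths of $P$ are $\w_{(1,0)}(P)$ and $\w_{(0,1)}(P)$, whose maximum equals $\w_{(0,1)}(P)$ under the reduced hypothesis. For the lower bound I would invoke the classical two-dimensional Gauss--Lagrange reduction applied to the norm $u\mapsto\w_u(P)$ on $\R^2$: Definition~\ref{D:reduced_basis} is exactly the Gauss reduction criterion, and a short standard argument then shows that every lattice vector lying outside the line $\R\cdot(1,0)$ has $\w$-value at least $\w_{(0,1)}(P)$. Since any basis of $\Z^2$ must contain such a vector, $\max(\w_{v_1}(P),\w_{v_2}(P))\ge\w_{(0,1)}(P)$ for every basis.

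For $\ls(P)=\nls(P)$, the inequality $\le$ is immediate since the transformations allowed for $\nls$ are a subset of those for $\ls$. The reverse requires showing $g(v_1,v_2)\ge\nls(P)$ for every basis $(v_1,v_2)$ of $\Z^2$. My approach is to run Gauss reduction on the basis itself: the elementary moves are swap $(v_1,v_2)\leftrightarrow(v_2,v_1)$, sign flip $v_i\mapsto -v_i$, and shear $(v_1,v_2)\mapsto(v_1,v_2+kv_1)$ for $k\in\Z$. Because $h$ is sublinear, $g$ is sublinear in $(v_1,v_2)$, and for each fixed $v_1$ the map $k\mapsto g(v_1,v_2+kv_1)$ is a convex function of $k$. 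The reduced inequalities for the standard basis, particularly $\w_{(1,\pm 1)}(P)\ge\w_{(0,1)}(P)$, furnish precisely the discrete convexity estimates needed to ensure that no shear drops $g$ below the values at the four canonical sign bases $(\pm e_1,\pm e_2)$, namely the $l_i(P)$.

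The main obstacle is this last step. Unlike the ``square'' functional $\max(\w_{v_1},\w_{v_2})$, which depends on $v_1$ and $v_2$ separately, the simplex functional $g$ couples them through the term $h(v_1+v_2)$, so a single elementary shear in $v_2$ modifies all three summands simultaneously. Turning the Gauss reduction into a rigorous lower-bound monitoring argument therefore requires a careful case analysis on the signs and sizes of the coordinates of $(v_1,v_2)$ expressed in the reduced basis, and it is at this step that both inequalities $\w_{(1,1)}(P)\ge\w_{(0,1)}(P)$ and $\w_{(1,-1)}(P)\ge\w_{(0,1)}(P)$ are used in tandem rather than either one alone. The full details are carried out in the cited references \cite{HarSopr, HarSoprTier}.
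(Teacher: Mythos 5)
First, a point of reference: the paper does not prove Theorem~\ref{T:reduced_computes} at all --- it is quoted from \cite{HarSopr} and \cite{HarSoprTier}, so there is no in-paper argument to compare your proposal against, and a complete proof would have to reproduce the work of those references. Within that framing, your two halves are uneven. For $\lss(P)=\w_{(0,1)}(P)$, the reformulation $\lss(P)=\min\max(\w_{v_1}(P),\w_{v_2}(P))$ over bases of $\Z^2$, combined with the standard Gauss--Lagrange fact that a reduced basis for the norm $u\mapsto\w_u(P)$ attains the two successive minima, is the expected route and can be made rigorous. (Note that the hypothesis as printed, $\w_{(0,1)}(P)\le\w_{(1,0)}(P)$, has the two directions swapped relative to Definition~\ref{D:reduced_basis}, Example~\ref{E:ex-l_i}, and the way the theorem is used later; you implicitly, and correctly, read it as $\w_{(1,0)}(P)\le\w_{(0,1)}(P)$.) One caveat you gloss over: your reformulations of $\lss$ and $\ls$ as minima of $\max(\w_{v_1},\w_{v_2})$ and of $g(v_1,v_2)$ over bases quietly replace the integer translation in Definition~\ref{D:ls_intro} by an arbitrary real one; for lattice polygons this is harmless, but for general convex bodies it deserves a sentence, since fractional parts of the optimal translate are exactly what the integer-translation definition does not let you ignore.

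The genuine gap is in the half $\ls(P)=\nls(P)$. As you say, $\ls(P)\le\nls(P)$ is trivial, so the entire content of the statement is the inequality $g(v_1,v_2)\ge\nls(P)$ for every basis $(v_1,v_2)$ of $\Z^2$ --- and that is precisely the step you do not carry out. You correctly identify the obstruction (the coupling term $h(v_1+v_2)$ prevents the square-case argument from transferring), but what follows is only a program: ``discrete convexity estimates,'' ``a careful case analysis,'' and finally ``the full details are carried out in the cited references.'' Deferring the decisive inequality to \cite{HarSopr, HarSoprTier} means the proposal establishes nothing beyond the trivial direction; it is a plausible plan for the easy half plus a citation for the hard half. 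To count as a proof you would need to actually execute the reduction: write an arbitrary basis in coordinates with respect to the reduced one and show, by the case analysis you only gesture at, that $g$ is bounded below by one of the four values $l_i(P)$, using both inequalities $\w_{(1,1)}(P)\ge\w_{(0,1)}(P)$ and $\w_{(1,-1)}(P)\ge\w_{(0,1)}(P)$. Until that is written out, the claim $\nls(P)\le\ls(P)$ rests entirely on the references --- which is exactly how the paper itself treats it, but it is not a proof.
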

  
For polygon $P$ in Example~\ref{E:ex-l_i} we have $\w_{(1,0)}(P)=2$, $\w_{(0,1)}(P)=3$, $\w_{(1,1)}(P)=4$, and $\w_{(1,-1)}(P)=3$. Hence the standard basis is reduced with respect to $P$, and  we conclude that  $\w(P)=2$, 
$\lss(P)=3$, and $\ls(P)=\nls(P)=3$.

\begin{example}\label{E:ex-I}
Let $I=[(0,0), (0,l)]$. Then $\w_{(1,0)}(I)=0$,  $\w_{(0,1)}(I)=l$, $\w_{(1,1)}(I)=l$,  and $\w_{(1,-1)}(I)=l$, so the standard basis is reduced. Hence by Theorem~\ref{T:reduced_computes} we have
$\ls(I)=\nls(I)=l$ and $\lss(I)=\w_{(0,1)}=l$. It follows that for any lattice segment $I$ containing $l+1$ lattice points we have $\lss(I)=\ls(I)=l.$ 
 \end{example} 
  
Note that for   $u\in\mathbb{R}^2$ and a unimodular matrix $A$ of size 2 we have  
$$\w_{u}(AP)=\max\limits_{x\in P} u\cdot (Ax)- \min_{x\in P} u\cdot (Ax)=\max\limits_{x\in P} (A^{T}u)\cdot x- \min_{x\in P} (A^{T}u)\cdot x=\w_{A^{T}u}(P).
$$
Therefore, if the rows of  $A$ are $u_1$ and $u_2$, then $\w_{(1,0)}(AP)=\w_{u_1}(P)$ and $\w_{(0,1)}(AP)=\w_{u_2}(P)$.
In the  next observation we show that if a plane convex body $P$ is inscribed in  $[0,h]^2$ and touches all of its sides, then  $\lss(P)=h$.

 \begin{proposition}\label{P:touch} Suppose that $\w_{(1,0)}(P)=\w_{(0,1)}(P)=h$. Then $\lss(P)=h$ and $$\w(P)=\min\{h,\w_{(1,1)}(P), \w_{(1,-1)}(P)\}.$$
 \end{proposition}
\begin{proof} Let $(a,b)\in\mathbb{Z}^2$ be a primitive direction. Without loss of generality we can assume that $|a|\geq |b|$. We then have
$$|a|h=\w_{(a,0)}(P)\leq \w_{(a,b)}(P)+\w_{(0,b)}(P)=\w_{(a,b)}(P)+|b|h,
$$
and so we get $\w_{(a,b)}(P)\geq (|a|-|b|)h$. Hence for $|a|>|b|$ we conclude that $\w_{(a,b)}(P)\geq h$. Hence directions $(\pm 1,\pm 1)$ are the only primitive directions with respect to which the width of $P$
could be less than $h$. Since no two out of these four directions can be used as rows to form a unimodular matrix, the conclusion follows.
\end{proof}

 \begin{definition}\label{D:drop} Let $P\subset\mathbb{R}^2$ be a lattice polygon and let $p\in P$ be one of its vertices. We say that a lattice polygon $Q$ is obtained from $P$ by $\it dropping$ $p$ if $Q$ is the convex hull of all the lattice points of $P$, except $p$.
 \end{definition}

\section{Lower bounds on the area of a plane convex body in terms of its width and lattice size.}

Consider a convex body $P\subset\mathbb{R}^2$. Let $w=\w(P)$ be its width and $A(P)$ be its area. It was shown in~\cite{TothMakai} that  $A(P)\geq \frac{3}{8}w^2$ and that this bound is attained only on the convex bodies that are lattice-equivalent to  $\conv\{(0,0), \left(w, \frac{w}{2}\right), \left(\frac{w}{2},w\right)\}$. We formulate a result which is a straight-forward corollary of this bound.

\begin{Theorem}\label{T:whbound}
Let $P\subset\mathbb{R}^2$ be a convex body with $h=\lss(P)$ and  $w=\w(P)$. Then for the area $A(P)$ of $P$ we have $A(P)\geq \frac{3}{8}wh$. This bound is sharp and attained only on the convex bodies $P$ that are lattice-equivalent to $\conv\{(0,0), \left(w, \frac{w}{2}\right), \left(\frac{w}{2},w\right)\}$.
\end{Theorem}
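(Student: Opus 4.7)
The plan is to apply the Fejes-T\'oth--Makai bound of \cite{TothMakai} not to $P$ directly but to a non-unimodular rescaling $L(P)$ whose lattice width has been boosted up to $h$. First I would apply an affine unimodular transformation (which preserves area, lattice width, and lattice size) so that the standard basis of $\Z^2$ is reduced with respect to $P$ with $\w_{(1,0)}(P) = w = \w(P)$ and $\w_{(0,1)}(P) = h = \lss(P)$; the reduced-basis condition then gives $\w_{(1,\pm 1)}(P)\geq h$, and the identification of $\lss(P)$ with $\w_{(0,1)}(P)$ is justified by Theorem~\ref{T:reduced_computes}. I would then take the linear map $L(x,y)=((h/w)\,x,\,y)$; the transformation rule $\w_u(L(P))=\w_{L^T u}(P)$ gives $\w_{(1,0)}(L(P))=\w_{(0,1)}(L(P))=h$ and $A(L(P))=(h/w)\,A(P)$.

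The heart of the argument is the claim $\w_{(1,\pm 1)}(L(P))\geq h$, equivalently $\w_{(1,\pm w/h)}(P)\geq w$. I would view $B:=\{u\in\R^2:\w_u(P)\leq 1\}$ as the unit ball of the centrally symmetric width norm, so that the claim becomes $(1/w,\pm 1/h)\notin\operatorname{int}(B)$. Suppose for contradiction that $(1/w,1/h)\in\operatorname{int}(B)$. Since $(0,1/h)\in\partial B$ (because $\w_{(0,1)}(P)=h$) and $(1/w,1/h)\in\operatorname{int}(B)$, the open segment joining the two points lies in $\operatorname{int}(B)$ by convexity of $B$. The point $(1/h,1/h)$ sits on this horizontal segment at parameter $w/h\in(0,1]$, so $(1/h,1/h)\in\operatorname{int}(B)$; but this translates to $\w_{(1,1)}(P)<h$, contradicting the reduced-basis inequality. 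The case $(1,-1)$ is symmetric, using $(0,-1/h)\in\partial B$.

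With $\w_{(1,0)}(L(P))=\w_{(0,1)}(L(P))=h$ and $\w_{(1,\pm 1)}(L(P))\geq h$, Proposition~\ref{P:touch} will give $\w(L(P))=h$, and Fejes-T\'oth--Makai will yield $A(L(P))\geq\frac{3}{8}h^2$, hence $A(P)\geq\frac{3}{8}wh$. The step I expect to be the main obstacle is the convexity argument of the previous paragraph: direct subadditivity of the width norm only produces trivial lower bounds on $\w_{(1,w/h)}(P)$, so one must genuinely pass to the unit ball $B$ and exploit the reduced-basis extremality of the $(1,\pm 1)$-directions. For sharpness, unwinding the chain forces $L(P)$ to be lattice-equivalent to the Fejes-T\'oth--Makai extremal triangle of side $h$; its $L^{-1}$-preimage is $\conv\{(0,0),(w,h/2),(w/2,h)\}$, and a short check of $\lss$ of this preimage (via the shear sending $(1,-1)$ to a standard direction, giving $\lss\leq(w+h)/2$) forces $w=h$, recovering precisely the lattice-equivalent copies of $\conv\{(0,0),(w,w/2),(w/2,w)\}$ claimed in the statement.
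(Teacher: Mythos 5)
Your proposal is correct and follows essentially the same route as the paper: normalize so the standard basis is reduced, rescale one coordinate anisotropically to reduce to the equal-width case, check that the reduced-basis inequalities $\w_{(1,\pm1)}\geq$ (side length) survive the rescaling, apply the Fejes-T\'oth--Makai bound, and pull back, with the equality case forcing $w=h$ via the $(1,-1)$-direction of the candidate triangle $\conv\{(0,0),(w,h/2),(w/2,h)\}$. The only differences are cosmetic: you scale $x$ up by $h/w$ where the paper scales $y$ down by $w/h$, and you verify the key inequality $\w_{(1,\pm1)}(L(P))\geq h$ via convexity of the width norm (the unit-ball segment argument), whereas the paper does the same step by a direct coordinate estimate on points extremizing $x+y$.
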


\begin{remark} It was shown in Lemma~3 of~\cite{BarPach} that for any lattice convex polygon $P\subset\mathbb{R}^2$ there exist numbers $w,h\geq 0$ with $wh\leq 4 A(P)$, such that  $[0,w]\times [0,h]$ 
contains a lattice-equivalent copy of $P$. Note that Theorem~\ref{T:whbound} strengthens this result replacing the constant of 4 with $\frac{8}{3}$, and also extends the result to plane convex bodies.
\end{remark}

\begin{proof}[Proof of Theorem~\ref{T:whbound}]
Let the standard basis be reduced with respect to $P$. Then by Theorem~\ref{T:reduced_computes} $\w_{(1,0)}(P)=w$ and $\w_{(0,1)}(P)=h$. Let $P'$ be the image of $P$ under the map $(x,y)\mapsto (x,\frac{w}{h}y)$.  
We next  check that the standard basis is also reduced with respect to $P'$. Denote by $(x_1,y_1)$ and $(x_2,y_2)$ points in $P$ that, correspondingly, minimize and maximize $x+y$ over $P$. Then  we have 
$\w_{(1,1)}(P)=x_2+y_2-(x_1+y_1)\geq h$. Since $y_2-y_1\leq h$, this implies that $x_2-x_1\geq 0$. Hence
$$\w_{(1,1)}(P')\geq (x_2-x_1)+\frac{w}{h}(y_2-y_1)\geq \frac{w}{h}(x_2-x_1)+\frac{w}{h}(y_2-y_1)\geq w.
$$  
Similarly, first reflecting $P$ in the line $y=w/2$, we conclude that $\w_{(1,-1)}(P')\geq w$, so we have checked that the standard basis remains reduced as we pass from $P$ to $P'$. 
We conclude that  $\w(P')=w$, and hence, as shown in~\cite{TothMakai}, we have  $A(P')\geq \frac{3}{8}w^2$ and this bound is attained at $P'$ which are lattice-equivalent to $\conv\{(0,0), \left(w, \frac{w}{2}\right), \left(\frac{w}{2},w\right)\}$.
This implies that $A(P)=\frac{h}{w}A(P')\geq \frac{3}{8}wh$ and this  bound can be attained only if  $P$ is lattice-equivalent to $\conv\{(0,0), \left(w, \frac{h}{2}\right), \left(\frac{w}{2},h\right)\}$. Since 
the standard basis is reduced with respect to $P$, if $w\geq h/2$, we have $\w_{(1,-1)}(P)=\frac{w+h}{2}\geq h$, which implies $w=h$. If $w<h/2$ then $\w_{(1,-1)}=h-w/2\geq h$, which implies $w=0$. We conclude that the bound is attained exactly at $P$ which are lattice-equivalent to $\conv\left\{(0,0), \left(w, \frac{w}{2}\right), \left(\frac{w}{2},w\right)\right\}$.
\end{proof}
 
 Let $P$ be a convex body with $l=\ls(P)$,  $h=\lss(P)$, and $w=\w(P)$. We can assume that $P\subset [0,h]^2$ and hence $P\subset[0,h]^2\subset 2h\Delta$, so we conclude that $\ls(P)\leq 2\lss(P)$.
 This implies that  $A(P)\geq \frac{3}{8}wh\geq \frac{3}{16}wl$. We now  improve this bound to a sharp bound $A(P)\geq \frac{wl}{4}$.

\begin{Theorem}\label{T:wlbound}
Let $P\subset\mathbb{R}^2$ be a convex body with $l=\ls(P)$ and $w=\w(P)$. Then for the area $A(P)$ of $P$ we have $A(P)\geq \frac{wl}{4}$. This bound is attained only at $P$ which are lattice-equivalent to 
$\conv\left\{(0,0), \left(w,\frac{w}{2}\right),\left(\frac{w}{2}, w\right)\right\}$.
\end{Theorem}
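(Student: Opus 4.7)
The plan is to reduce to the case where the standard basis is reduced with respect to $P$, and then to combine Theorem~\ref{T:whbound} with a case analysis based on the ratio $\ls(P)/\lss(P)$. After applying an affine unimodular transformation I may assume that the standard basis is reduced, so by Theorem~\ref{T:reduced_computes} I have $w = \w(P) = \w_{(1,0)}(P) \leq h = \lss(P) = \w_{(0,1)}(P)$ with $\w_{(1,\pm 1)}(P) \geq h$, and $l := \ls(P) = \nls(P) = \min(l_1,l_2,l_3,l_4)$. Writing $c = \w_{(1,1)}(P)$ and $d = \w_{(1,-1)}(P)$, a direct computation from the definitions of the $l_i$ yields the identities $l_1 + l_2 = w + h + c$ and $l_3 + l_4 = w + h + d$, so that
\[
l \;\leq\; \frac{w + h + \min(c,d)}{2}.
\]

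In the easier case $l \leq \frac{3h}{2}$, Theorem~\ref{T:whbound} directly gives $A(P) \geq \frac{3wh}{8} \geq \frac{3w \cdot (2l/3)}{8} = \frac{wl}{4}$, so the bound holds. In the harder case $l > \frac{3h}{2}$, the identity above forces $\min(c,d) > 2h - w$, so that both diagonal widths of $P$ are large. I plan to exploit this by considering the convex hull $Q$ of the four extremal points of $P$ in the diagonal directions $\pm(1,\pm 1)$: in the coordinates $(u,v) = (x+y,\,x-y)$, the image of $Q$ is a convex quadrilateral with one vertex on each of the four sides of a rectangle of dimensions $c \times d$, whose area (together with the constraints on the positions of the vertices coming from the axis widths $w$ and $h$ of $P$) provides a second lower bound on $A(P)$. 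Combining this with the bound $A(P) \geq \frac{3wh}{8}$ from Theorem~\ref{T:whbound} should close the gap and yield $A(P) \geq \frac{wl}{4}$.

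For the equality case, equality in the final bound forces $l = \frac{3h}{2}$ (the case boundary) together with equality in Theorem~\ref{T:whbound}; the equality case of the Fejes-T\'oth--Makai inequality (used in the proof of Theorem~\ref{T:whbound}) then forces $P$ to be lattice-equivalent to $\conv\{(0,0),\,(w,w/2),\,(w/2,w)\}$, which indeed has the required widths $w = h$, $c = \frac{3w}{2}$, $d = w$ and achieves $A = \frac{wl}{4} = \frac{3w^2}{8}$. The main obstacle is the second case: the axis-width bound $\frac{3wh}{8}$ from Theorem~\ref{T:whbound} alone is insufficient when $l > \frac{3h}{2}$, and combining it with diagonal-width information into a single sharp inequality $A(P) \geq \frac{wl}{4}$ requires a delicate geometric argument that uses all four widths $w, h, c, d$ of $P$ simultaneously.
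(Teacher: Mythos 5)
Your setup and the easy half are fine: after passing to a reduced basis, the identities $l_1(P)+l_2(P)=w+h+\w_{(1,1)}(P)$ and $l_3(P)+l_4(P)=w+h+\w_{(1,-1)}(P)$ are correct, and in the regime $l\le\frac{3}{2}h$ the chain $A(P)\ge\frac{3}{8}wh\ge\frac{wl}{4}$ via Theorem~\ref{T:whbound} is a genuinely shorter route than the paper's for those configurations (the paper does not invoke the $\frac{3}{8}wh$ bound inside this proof at all; it rescales to $w=h$ and runs a direct case analysis). But the theorem is not proved: in the complementary regime $l>\frac{3}{2}h$ --- which does occur, since $\ls(P)$ can be as large as $2\lss(P)$ (e.g.\ a square) --- you offer only a plan, and you yourself flag the missing ``delicate geometric argument'' as the main obstacle. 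That regime is exactly where the content of the theorem lies, and it is where the paper spends essentially all of its effort: after reducing to $w=h$ it takes the eight extremal points (one on each side of $[0,w]^2$ plus the four extremal in the directions $\pm(1,\pm 1)$), slides the diagonal-extremal points to the boundary of the square, and works through eight cases (Cases 0--7) with explicit area computations driven by inequalities such as $l_2(S)=2w-a\ge l$ and $l_4(S)=w+b\ge l$.

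Moreover, the specific plan you sketch has a concrete pitfall. In the coordinates $(x+y,\,x-y)$ the hull of the four diagonal-extremal points is indeed a quadrilateral with one vertex on each side of a $c\times d$ rectangle, but such a quadrilateral can have arbitrarily small area (it can degenerate toward a diagonal of the rectangle), so the diagonal widths alone give no lower bound whatsoever; everything hinges on how the positions of those four vertices are constrained by the axis widths $w$ and $h$, and controlling that interaction is precisely what the paper's eight-case analysis does --- so ``combining the two bounds'' is not a finishing touch but the entire difficulty. Your equality analysis inherits the same gap: you conclude $l=\frac{3}{2}h$ only because that is the boundary of your first case, but until the second case is proved with strict inequality away from the extremal triangle you cannot exclude equality there; note that in the paper equality does surface inside the hard case analysis (Cases 3 and 4, exactly when $2l=3w$), and ruling out all other equality configurations requires separate arguments case by case.
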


\begin{proof}
Let the standard basis be reduced with respect to $P$. Then by Theorem~\ref{T:reduced_computes} we have  $w=\w_{(1,0)}(P)$ and $h:=\lss(P)=\w_{(0,1)}(P)$, so we can assume $P\subset \Pi:= [0,w]\times[0,h]$. We can also assume  that $P\subset l\Delta$, where $l=\ls(P)$ and $P$ touches all three sides of $l\Delta$. 

Suppose first that  $h=w$ and hence $P\subset\Pi=[0,w]^2$. Pick points $p_1, p_2, p_3$, and $p_4$ in $P$, one on each side of $\Pi$, and points $q_1, q_2, q_3$, and $q_4$ that maximize and minimize over $P$ the linear functions  $x+y$ and $x-y$, as depicted in the first diagram of Figure~\ref{F:8to4points}. Note that some of these eight points may coincide. Let $Q=\conv\{p_1, p_2, p_3, p_4, q_1, q_2, q_3, q_4\}$. 
\begin{figure}
\begin{center}
\includegraphics[scale=.65]{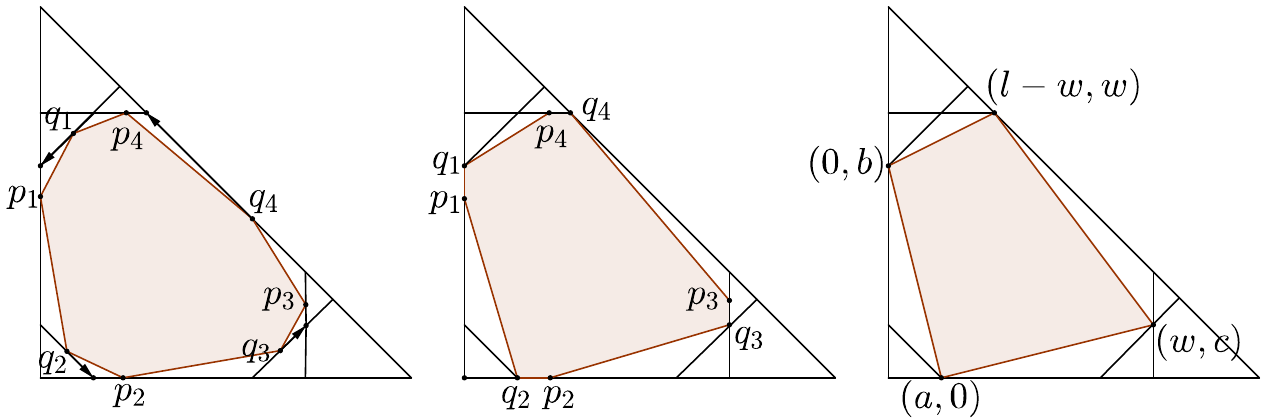} 
\caption{Case 0 reduction}
\label{F:8to4points} 
\end{center}
\end{figure}

If we move $q_4$ within $\Pi$ along the support line $x+y=l$ , the area of the triangle with the vertices $p_3, q_4, p_4$ will be the smallest
when $q_4$ is on $y=w$ or $x=w$, depending on the slope of the line connecting $p_3$ and $p_4$. Therefore, we can move $q_4$ to the boundary of $\Pi$ not increasing the area, and preserving the width and both lattice sizes.
Similarly, we move $q_1$, $q_2$, and $q_3$ along the corresponding support lines to the boundary of $\Pi$. If we end up with the case when there is one of the  $q_i$ on each side of $\Pi$, as in the second diagram of Figure~\ref{F:8to4points}, we pass to $R=\conv\{q_1,q_2, q_3, q_4\}$, depicted in the third diagram. 

Note that passing from $P$ to $Q$ to $R$ we did not increase the area and did not change  the minima and the maxima in the directions $(1,0), (0,1), (1,\pm 1)$.  Hence the standard basis remains reduced and there is no change in $l_1, l_2,l_3$ and $l_4$. Hence $R$ has the same $l,w$, and $h$ as $P$, and $\ls(R)=l_1(R)=l$.

 Let $q_1=(0,b)$, $q_2=(a,0)$, $q_3=(w,c)$, and $q_4=(l-w,w)$. Note that since the standard basis is reduced we have $\w_{(1,1)}(R)=l-a\geq w$, so $l-w-a\geq 0$. Also, $\w_{(1,-1)}(R)=w-c+b\geq w$, so $b\geq c$. We get
\begin{align*}
2A(P)\geq 2A(R)&=2w^2-ab-(w-b)(l-w)-(w-a)c-(2w-l)(w-c)\\
&=w^2+(b-c)(l-w-a)\geq w^2\geq\frac{wl}{2},
\end{align*}
where we used $2w\geq l$ which holds true since $(w,w)$ is on $x+y=l$ or outside of $l\Delta$. In order for the inequality to be attained we would need $w=l/2$ and also $b=c$ or $l=w+a$.
Let $w=l/2$ and  $b=c$. Then we  have $l_4(R)=w+b\geq l$  and $c+w\leq l$ since $(c,w)\in l\Delta$. Hence $b=c=l/2$, but this contradicts $l_3(R)=2w-c\geq l$.
If $w=l/2$ and $l=w+a$, then $a=l/2$ and this contradicts $l_4(R)=2w-a\geq l$. We conclude that in this case the inequality is strict.

Each of the $q_i$'s may slide along the support lines in one of the two directions. Due to the symmetry in the line $x=y$, we can assume that $q_4$ slides toward $y=w$, as  in Figure~\ref{F:8to4points}. Since for each of $q_1, q_2, q_3$ we have have two choices, there are eight cases total, one of which we just covered and will refer to  as Case 0.
The remaining seven cases are depicted in  Figure~\ref{F:SevenCases}, where we also introduce the notation for the coordinates for some of the $q_i$'s. In each of these cases, we move the $q_i$'s to the boundary of $\Pi$ not increasing the area, after which we drop (see Definition~\ref{D:drop}) the $p_i$'s on the sides  of $\Pi$ where we now have a $q_i$. We next cover each of these seven cases.   
\begin{figure}
\begin{center}
\includegraphics[scale=.85]{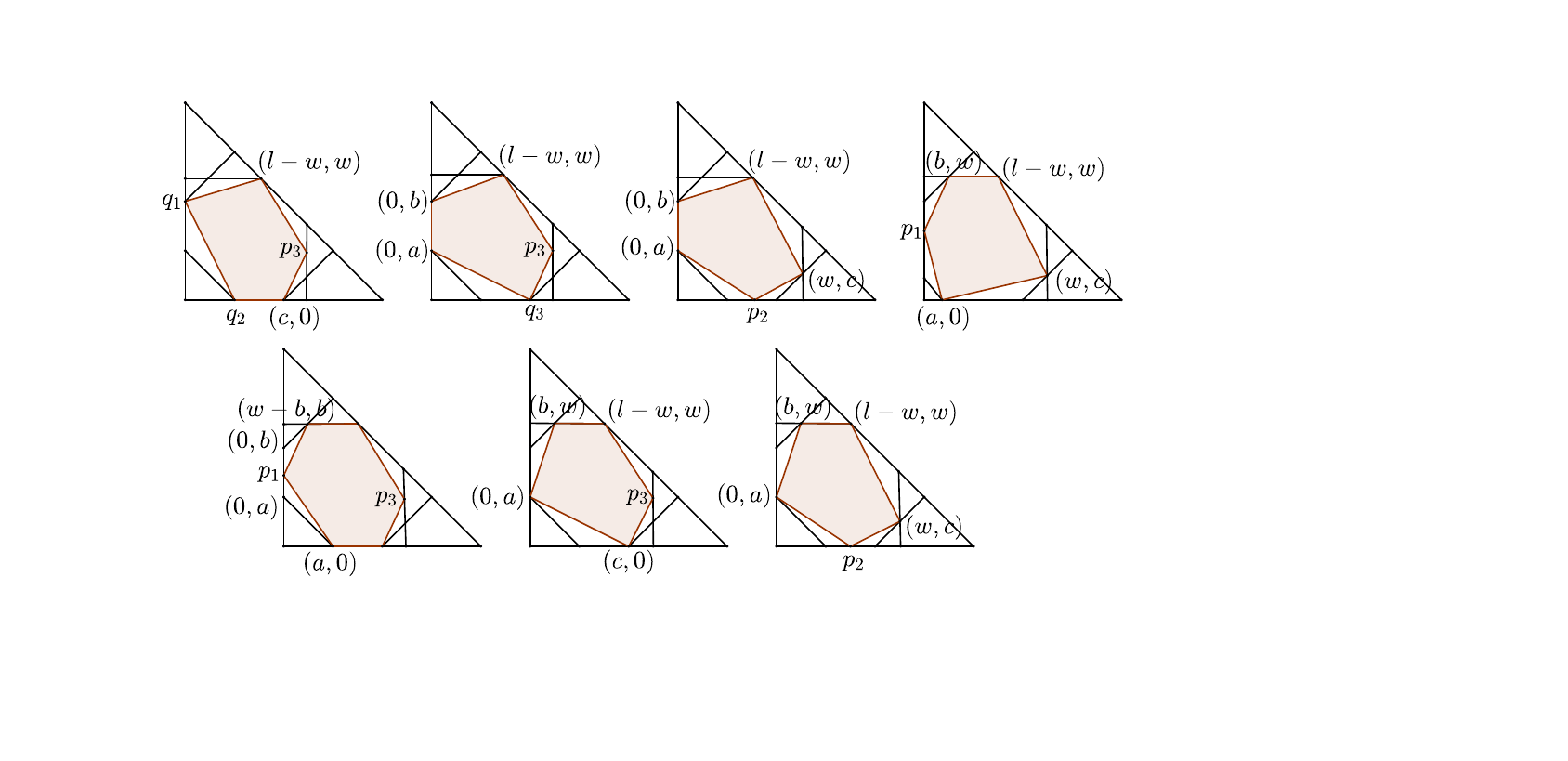} 
\caption{Seven cases}
\label{F:SevenCases} 
\end{center}
\end{figure}

In Case 1, let $q_3=(c,0)$ and $q_4=(l-w,w)$, as depicted in the first diagram in Figure~\ref{F:SevenCases}. Then  we have $l_3(R)=w+c\geq l$, and hence the  slope of the line connecting $(l-w,w)$ to $(c,0)$ is negative. This implies that we can slide $p_3$ to $(w,w-c)$ and then, unless $c=w$,  drop $(c,0)$, which reduces this case to Case 0.
 
In Case 2, $l_2(R)=2w-a\geq l$ implies that  the slope of the line connecting $(0,a)$ to $(l-w,w)$ is at least 1, so we can slide $(0,b)$ to $(w-b,w)$ not increasing the area.
This reduces Case 2 to Case 6.

\begin{figure}[h]
\begin{center}
\includegraphics[scale=.85]{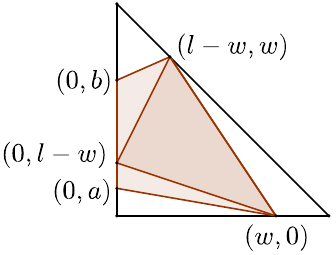} 
\caption{Case 3}
\label{F:RST} 
\end{center}
\end{figure}

In Case 3,  if $a\leq c$, we can slide $p_2$  to $(a,0)$ and,  unless $a=0$, drop $(0,a)$ reducing to Case 0.
If $a>c$, we move $p_2$ all the way to $(w,0)$ and, unless $c=0$, drop $(w,c)$. The obtained quadrilateral $S$ is depicted in Figure~\ref{F:RST}. We have $A(S)\leq A(R)$, $\w_{(1,-1)}(S)\geq \w_{(1,-1)}(R)$, and
$l_3(S)\geq l_3(R)$, while all other parameters remain the same. Hence the standard basis remains reduced and $l_1(S)=l_1(R)=\ls(S)$.

We have
\begin{align*}
2A(S)=2w^2-aw-(w-b)(l-w)-w(2w-l)=w(w-a)+b(l-w).
\end{align*}
 Since $l_2(S)=2w-a\geq l$ we get $2A(S)\geq w(l-w)+b(l-w)=(w+b)(l-w)$. Also, $l_4(S)=w+b\geq l$ and we can conclude that $2A(S)> \frac{wl}{2}$, provided that $l> \frac{3}{2}w$. 
 
 Suppose next that $l\leq\frac{3}{2}w$.
We have $l_4(S)=w+b\geq l$ and $\w_{(1,1)}(P)=l-a\geq w$ and hence $a\leq l-w\leq b$. We conclude that  $T=\conv\{(0,l-w), (w,0), (l-w,w)\}$ is contained in $S$ and we get 

 \begin{align*}
2A(P)\geq 2A(T)&=\left|\begin{matrix}w&-(l-w)\\l-w&2w-l\end{matrix}\right|=w(2w-l)+(l-w)^2\\
&=w^2-(l-w)(2w-l)\geq  w^2-\frac{w}{2}(2w-l)=\frac{wl}{2},
\end{align*}
where we used $l_3(S)=2w\geq l$, so $2w-l\geq 0$.
The inequality turns into equality if and only if $l=\frac{3}{2}w$ and $P$ is lattice-equivalent to $T$, that is, to $\conv\left\{\left(w,0\right), \left(\frac{w}{2}, w\right), \left(0,\frac{w}{2}\right)\right\},$ which is lattice-equivalent to
$\conv\{(0,0), \left(\frac{w}{2}, w\right), \left(w,\frac{w}{2}\right)\}$ via the map $(x,y)\mapsto(w-x,y)$.
 
In Case 4, if $b\leq a$  we move  $p_1$ to $(0,w-b)$ and  drop $(b,w)$ to reduce to Case 0.
\begin{figure}[h]
\begin{center}
\includegraphics[scale=.85]{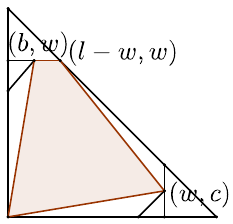} 
\caption{Case 4}
\label{F:Case4} 
\end{center}
\end{figure}
If $b>a$ we move $p_1$ all the way to $(0,0)$ and, unless $a=0$, drop $(a,0)$. The obtained quadrilateral $S$ is depicted in Figure~\ref{F:Case4}. If  $3w<2l$ then  
 \begin{align*}
2A(S)&=2w^2-bw-wc-(w-c)(2w-l)=w(w-b)+(w-c)(l-w)\\
&\geq w(l-w)+(w-c)(l-w)=(l-w)(2w-c)> \frac{wl}{2},
\end{align*} 
where we used $l_3(S)=2w-c\geq l$ together with $l-w>w/2$.

If $3w\geq 2l$, move $(w,c)$ to $(w,l-w)$, non-increasing the area, and let $T=\conv\{(0,0), (w,l-w), (l-w,w)\}.$ Then
$$2A(P)\geq 2A(S)\geq  2A(T)=2wl-l^2=l(2w-l)\geq \frac{wl}{2}.
$$
The inequality turns into equality if and only if $3w=2l$ and $P$ is lattice-equivalent to $T$, that is,  to $\conv\{(0,0), (\frac{w}{2},w), (w,\frac{w}{2})\}$.

In Case 5, if $w-b\leq a$, we move $p_1$ to $(0,b)$ and remove $(w-b,w)$, reducing this case to Case 1. If $w-b> a$, we move $p_1$  to $(0,a)$  and remove $(a,0)$, reducing this case to Case 6.
 
In Case 6, we have $l_3(R)=w+c\geq l$, and hence the area does not increase as we move $p_3$ to $(w,0)$ and then drop $(c,0)$, unless $c=w$, to get $S$. Then
$$2A(S)=2w^2-aw-(w-a)b-(2w-l)w=(w-a)(w-b)+w(l-w).
$$
Since $l_4(S)=2w-b\geq l$ we get $w-b\geq l-w$ and hence
$$2A(S)\geq (w-a)(l-w)+w(l-w)=(2w-a)(l-w).
$$
We have $l_2(S)=2w-a\geq l$ and hence, if we additionally assume that $l>\frac{3}{2}w$, we get $2A(R)> (2w-a)(l-w)> \frac{wl}{2}$.

Assume next that $l\leq \frac{3}{2}w$. We have $l_3(S)=w+c\geq l$ and $\w_{(1,1)}(S)=l-a\geq w$.
Hence, dropping $(b,w)$, unless $b=l-w$, and then moving  $(0,a)$ up to $(0,l-w)$  
we obtain triangle $T=\conv\{(0,l-w), (w,0), (l-w,w)\}$, considered in Case 3.

In Case 7, if $c\leq a$, we can move $p_2$ to $(w-c,0)$  and we are in Case 6. If  $c>a$ we move $p_2$ to $(a,0)$ and end up in Case 4, which completes the argument.

It remains to explain how the general case reduces to the case when $h=w$.
Let $P'$ be the image of $P$ under the map $(x,y)\mapsto (x,\frac{w}{h}y)$.
Then, as we have shown in the proof of Theorem~\ref{T:whbound},  the standard basis is also reduced with respect to $P'$. Hence $\w(P')=\lss(P')=w$ and 
$\ls(P')$ is the smallest of $l_1(P'), l_2(P'), l_3(P')$, and $l_4(P')$. Pick  $(x_1,y_1)\in P$ that satisfies $x_1+y_1=l$.
Then $(x_1,\frac{w}{h}y_1)\in P'$ and we have
$$l_1(P')\geq x_1+\frac{w}{h}y_1\geq\frac{w}{h}(x_1+y_1)=\frac{w}{h}l_1(P)=\frac{w}{h}l.
$$
Similarly, for $i=2,3,4$ we get $l_i(P')\geq \frac{w}{h}l_i(P)\geq \frac{w}{h} l_1(P)=\frac{w}{h}l$ and hence we can conclude that $\ls(P')\geq \frac{w}{h}l$. 

From the above argument we know that $A(P')\geq\frac{\ls(P')\w(P')}{4}$. Together with $\ls(P')\geq \frac{w}{h}l$ and $A(P')=\frac{w}{h}A(P)$ this implies $A(P)\geq \frac{wl}{4}$. 
The inequality for $P'$ turns into equality if and only if $P'=\conv\left\{(0,0), \left(w,\frac{w}{2}\right),\left(\frac{w}{2}, w\right)\right\}$. For such $P'$ we have
$$P=\conv\left\{(0,0), \left(w,\frac{h}{2}\right),\left(\frac{w}{2}, h\right)\right\}.$$ 
We get  $A(P)=\frac{3}{8}wh$,  $\ls(P)=l_1(P)=\frac{w}{2}+h$ and $\w(P)=w$. It follows that $A(P)\geq \frac{\w(P)\ls(P)}{4}$ turns into equality only if $\frac{3}{8}wh=\frac{w}{4}\left(\frac{w}{2}+h\right)$, which is equivalent to $h=w$.

\end{proof}

\section{Proving $A(P)\geq \frac{1}{2}\ls(P)$  for lattice polygons $P$.}

If $P\subset\mathbb{R}^2$ is a lattice polygon with nonzero area then its width is at least 1 and Theorem~\ref{T:wlbound} implies that $A(P)\geq \frac{1}{4}\ls(P)$. In this section we will improve this bound to a sharp bound $A(P)\geq \frac{1}{2}\ls(P)$. We will also observe that this implies that $A(P)\geq \frac{1}{2}\lss(P)$, which is again a sharp bound.

\begin{Theorem}\label{T:ls(P)2A} Let $P\subset\mathbb{R}^2$ be a convex lattice polygon of nonzero area $A(P)$ and lattice size $l=\ls(P)$. Then $A(P)\geq \frac{1}{2}\ls(P)$. This bound is sharp and is attained  exactly at lattice polygons $P$ that are lattice-equivalent to one of the following:
\begin{itemize}
\item[(a)] $\conv\{(0,0), (l,0), (0,1)\}$ for $l\geq 1$;
\item[(b)] $[0,1]^2$ for $l=2$;
\item[(c)] $T_0=\conv\{(0,0), (1,2), (2,1)\}$ for $l=3$.
\end{itemize}
\end{Theorem}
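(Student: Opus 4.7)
The plan is to split on $\w(P)$, since $P$ being a lattice polygon of nonzero area forces $\w(P) \geq 1$.

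\textbf{Case 1: $\w(P) \geq 2$.} I would apply Theorem~\ref{T:wlbound} directly:
$$
A(P) \;\geq\; \frac{\w(P)\,\ls(P)}{4} \;\geq\; \frac{\ls(P)}{2}.
$$
For equality throughout, one needs $\w(P) = 2$ and (by the equality clause of Theorem~\ref{T:wlbound}) $P$ lattice-equivalent to $\conv\{(0,0), (1,2), (2,1)\}$. This polygon has area $3/2$ and lattice size $3$, which is case (c).

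\textbf{Case 2: $\w(P) = 1$.} After a lattice equivalence, $P \subset \R \times [0,1]$ and meets both bounding lines. Every lattice point of $P$ has integer $y$-coordinate in $[0,1]$, so every vertex of $P$ lies on $y=0$ or $y=1$. Hence $P$ is a triangle or a trapezoid with vertices on these lines, and after an integer translation, a shear $(x,y)\mapsto(x-ky,y)$, and possibly the reflection $(x,y)\mapsto(x,1-y)$, it takes one of the normal forms
$$
T_a = \conv\{(0,0),(a,0),(0,1)\}\ (a\geq 1) \quad \text{or} \quad Q_{a,c} = \conv\{(0,0),(a,0),(0,1),(c,1)\}\ (1\leq a\leq c).
$$
In both cases $\w_{(0,1)}=1\leq \w_{(1,0)}$ and $\w_{(1,\pm 1)}\geq 1$, so the standard basis is reduced; Theorem~\ref{T:reduced_computes} then gives $\ls(P) = \nls(P) = \min(l_1,l_2,l_3,l_4)$.

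For $T_a$ a direct computation gives $l_1 = a$ and $l_2 = l_3 = l_4 = a+1$, hence $\ls(T_a) = a$ and $A(T_a) = a/2 = \ls(T_a)/2$; these are the extremal polygons of family (a). For $Q_{a,c}$ one finds $l_1 = l_2 = l_4 = c+1$ and $l_3 = 1 + \max(a, c-1)$, so
$$
\ls(Q_{a,c}) = \begin{cases} c & \text{if } c \geq a+1, \\ c+1 & \text{if } a = c, \end{cases}
$$
while $A(Q_{a,c}) = (a+c)/2$. In the first subcase $A = (a+c)/2 \geq (c+1)/2 > c/2 = \ls/2$, so the inequality is strict. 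In the second subcase $A = c \geq (c+1)/2 = \ls/2$, with equality only when $a = c = 1$, giving $P = [0,1]^2$, i.e.\ family (b).

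\textbf{Main obstacle.} The genuinely delicate point is the case split $a < c$ versus $a = c$ for the trapezoid: in the proper trapezoid $\ls$ is realized by $l_3$, while the parallelogram $a=c$ forces $l_3$ to jump up to $c+1$ so that $\ls$ becomes $c+1$. Without isolating this distinction, one would either overcount extremal configurations (missing the square) or undercount them. Once the normalization is carried out and this dichotomy is addressed, both the inequality and the sharp classification follow from elementary computations.
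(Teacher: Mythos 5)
Your proof is correct, and it takes a genuinely different route from the paper. The paper proves Theorem~\ref{T:ls(P)2A} by a self-contained case analysis inside $l\Delta$: it first treats the case where $P$ contains a vertex of $l\Delta$ (splitting on $m=\max_{(x,y)\in P}x$), then the case where $P$ contains a triangle with one vertex on each side of $l\Delta$, using only Theorem~\ref{T:reduced_computes} and explicit area estimates; Theorem~\ref{T:wlbound} is invoked only for the weaker bound $A(P)\geq\frac{1}{4}\ls(P)$. You instead split on the lattice width: for $\w(P)\geq 2$ you get $A(P)\geq\frac{\w(P)\ls(P)}{4}\geq\frac{\ls(P)}{2}$ directly from Theorem~\ref{T:wlbound}, and its equality clause (with $\w(P)=2$ forced) pins down case (c) at once; for $\w(P)=1$ you reduce, via shear and reflection, to the normal forms $T_a$ and $Q_{a,c}$, verify the standard basis is reduced, and compute $\nls$ explicitly, which recovers families (a) and (b) — your computations ($\ls(T_a)=a$, $\ls(Q_{a,c})=c$ if $c\geq a+1$ and $c+1$ if $a=c$, with areas $a/2$ and $(a+c)/2$) check out, and the dichotomy $a<c$ versus $a=c$ is indeed the point that produces the square. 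There is no circularity, since Theorem~\ref{T:wlbound} precedes this theorem and its proof does not use it. What each approach buys: yours is considerably shorter and makes transparent where each extremal family comes from, but it leans on the full strength of Theorem~\ref{T:wlbound}, including its equality characterization, which is the hardest result in the paper; the paper's argument is longer but independent of that theorem's sharp form and stays entirely within elementary manipulations in $l\Delta$.
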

\begin{remark} It follows from this theorem that for any lattice polygon $P$ of nonzero area $A(P)$ there exists a lattice-equivalent copy of $P$ contained in $2A(P)\Delta$.
\end{remark}

\begin{proof}[Proof of Theorem~\ref{T:ls(P)2A}]
Let $l=l_1(P)=\ls(P)$ so that $P\subset l\Delta$.  For $l=1$ and 2 the conclusion is clear, so we will assume that $l\geq 3$. 
We first  consider the case where  $P$ contains one of the vertices of $l\Delta$. 
Note that  $\varphi: \begin{bmatrix} x\\y\end{bmatrix}\to \begin{bmatrix} -1&-1\\1&0\end{bmatrix}\begin{bmatrix} x\\y\end{bmatrix}+
\begin{bmatrix} l\\0\end{bmatrix}$ maps  $l\Delta$ to itself rotating its vertices in the counterclockwise direction.
Using map $\varphi$ together with the reflection in the line $y=x$ we can assume that  $P$ contains the origin and point $(c,l-c)$ with $l/2\leq c\leq l$.

Denote $I=[(0,0), (c,l-c)]$. Let $m=\max_{(x,y)\in P} x$ be attained at $p\in P$. Since 
$$l_4(P)=m+\max_{(x,y)\in P}(y-x)\geq l,$$ 
we have $\max_{(x,y)\in P}(y-x)\geq l-m$. Let this maximum be attained at $q\in P$. 

\begin{figure}[h]
\begin{center}
\includegraphics[scale=.65]{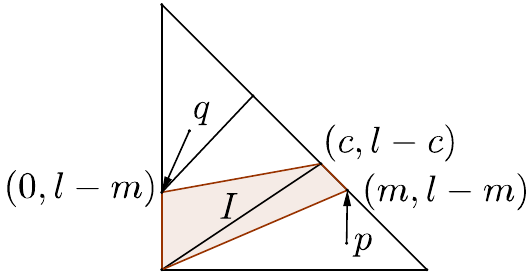} 
\caption{$P$ contains the origin}
\label{F:ls(P)2A}
\end{center}
\end{figure}

Let $Q=\conv\{(0,0), (c,l-c), p, q\}$.
Since the slope of $I$ is at most 1 we can move $p$ and $q$ to $(m,l-m)$ and $(0,l-m)$ correspondingly not increasing the area of $Q$, as illustrated in  Figure~\ref{F:ls(P)2A}. Therefore the area of $Q$
is bounded below by the area of  
$$R=\conv\{(0,0), (m,l-m), (0,l-m)\}.$$
For $m\leq l-2$ we  have $2A(P)\geq 2A(R)=m(l-m)\geq 2m\geq l$ since $m\geq c\geq l/2$.
This inequality turns into equality if and only if $m=c=2$ and $l=4$. Then $R=\{(0,0), (2,2), (0,2)\}$, $p=(2,2)$, and the only options for $q$ are $(0,2)$ and $(1,3)$. In both of these cases we get $\ls(R)<4$, 
and hence the inequality is not attained under our current assumptions.

If $m=l$, since $P$ has nonzero area, it contains at least one lattice point outside $[(0,0), (l,0)]$. Hence $A(P)\geq l/2$, with equality if and only if $P$ is lattice-equivalent to $\conv\{(0,0), (l,0), (0,1)\}$.

Let $m=l-1$ and assume that $(l-1,0)\in P$. If $P$ contains a point with the $y$-coordinate equal to at least 2, then $A(P)\geq l-1> l/2$  and hence we can assume that $P\subset [0,l-1]\times[0,1]$.  
The lattice size of any lattice polygon properly contained in  $[0,l-1]\times[0,1]$ is  less than $l$, and we conclude that  $P=[0,l-1]\times[0,1]$, so $A(P)=l-1> l/2$.

If $(l-1,0)\not\in P$ then $P$ contains $(l-1,1)$ and we have $\max_{(x,y)\in P}(x-y)=l-2$. Hence from $l_3(P)\geq l$ we get $\max_{(x,y)\in P} y\geq 2$
and $A(P)$ is larger than or equal to the area of $Q=\conv\{(0,0), (l-1,1), (l-2,2)\}$, whose area is $l/2$. Also, $A(P)>\frac{l}{2}$ unless $P=Q$.
Also, $l_4(Q)=l-1$ unless $l\leq 3$. Hence we get a strict bound $l(P)>l/2$ for $l>3$. For $l=3$ the bound is attained at $\{(0,0), (2,1), (1,2)\}$.

Now we can assume that $P$ contains a lattice triangle $T$ with exactly one vertex on each side of $l\Delta$, as depicted in Figure~\ref{F:triangle}.
Let the vertices of $T$ be $(a,0)$, $(0,b)$, and $(c,l-c)$, with $a,b,c\in[1,l-1]$. As above, using map $\varphi$ together with the reflection in the line $y=x$,  we can assume that $a$ is the smallest
of the six numbers $a,l-a,b,l-b,c,l-c$. This, in particular, implies that $a\leq b, a\leq c$, and $a\leq l/2$.
\begin{figure}[h]
\begin{center}
\includegraphics[scale=.65]{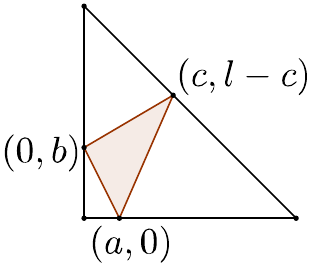} 
\caption{$P$ contains lattice triangle $T$}
\label{F:triangle}
\end{center}
\end{figure}
For $a\geq 2$ and $b\leq l/2$ we have
$$2A(T)=\det\begin{bmatrix} a&-b\\c&l-b-c\end{bmatrix}=al-ab-ac+bc=a(l-b)+c(b-a)\geq 2(l-b)\geq l.
$$
Note that we get equality  if and only if $a=b=2$, $l=4$, and $c=1,2$ or 3, but then $l_2(T)<l$. This means that $T\subsetneq P$ and hence $2A(P)>l$. 
If $a\geq 2$ and $c\leq l/2$ then $2A(T)=a(l-c)+b(c-a)\geq l$ with equality if and only if $a=c=2$, $l=4$, and $b=1,2,3$ in each of which case we have
$l_2(T)<l$, so we get the same conlclusion.

Suppose next that $b>l/2$, $c>l/2$, while $a\geq 2$. If we also have $c\geq a+2$ or $b\geq a+2$ then
$$2A(T)=b(c-a)+a(l-c)\geq 2b> l {\rm\ \  or\ \ } 2A(T)=c(b-a)+a(l-b)\geq 2c>l.
$$
If we have both $l/2<b\leq a+1$ and $l/2<c\leq a+1$ then since $a\leq l/2$ we have $b=c=a+1$. We get
$$2A(T)=l+(a-1)(l-1-a)\geq l.
$$
This inequality is in fact strict since $a=l-1$ would imply $b=c=l$ and since we assumed that $a\geq 2$. 

\begin{figure}[h]
\begin{center}
\includegraphics[scale=.32]{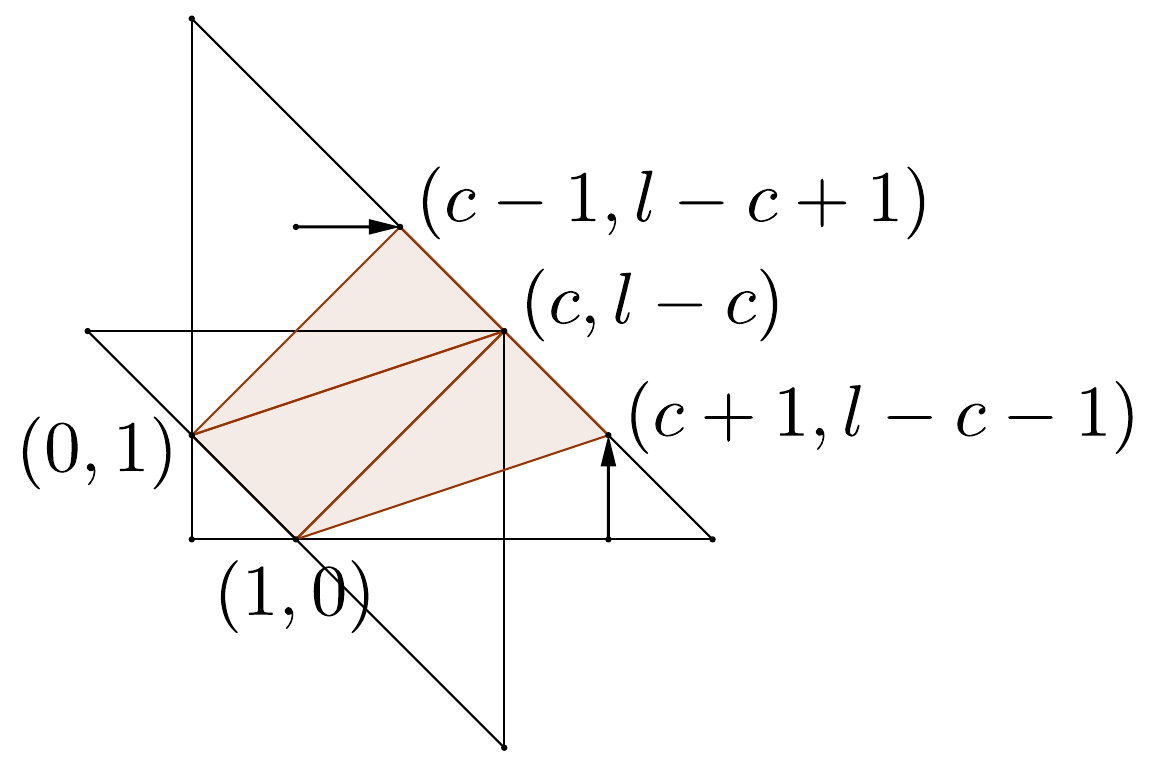} 
\caption{The case $a=b=1$}
\label{F:3triangles}
\end{center}
\end{figure}

Finally, if $a=1$ then $2A(T)=bc+l-b-c$, and we have  $2A(T)> l$ unless $bc\leq b+c$, that is, $b=c=2$ or one of $b,c$ is equal to 1. 
Let $b=c=2$ and assume that $l=3$. Then we see that the bound is attained  at a  triangle lattice-equivalent to $T_0$.
If $l\geq 4$ we have $2A(T)=l$ and $l_2(T)=l-1$, which means that $P$ contains $T$ properly and $2A(P)>l$.

If  $b=1$ then $2A(T)=l-1$ and  $l_2(T)=l-1$, which implies $T\subsetneq P$. Even if $(0,0)\in P$ we would still have $l_4(P)<l$ and hence $P$ has to contain
a lattice point  $(x,y)\in l\Delta$ such that $x\geq c+1$ or $y\geq l-c+1$. In the first of these two cases the area of $\conv\{T\cup(x,y)\}$ would be minimal if $(x,y)=(c+1,l-c-1)$, and 
in the second the area would be minimal if $(x,y)=(c-1,l-c+1)$.  See Figure~\ref{F:3triangles} for an illustration.
In both cases we get $2A(P)\geq 2(l-1)>l$.

If $c=1$ we have $l_2(T)=l-1$ and hence $P$ contains a point with the $x$-coordinate equal to at least 2, which implies $2A(P)\geq 2(l-1)>l$.
\end{proof}

\begin{corollary}\label{C:lss(P)2A}
Let $P\subset\mathbb{R}^2$ be a convex lattice polygon with nonzero area $A(P)$ and $\lss(P)=h$. Then $A(P)\geq \frac{1}{2}\lss(P)$ 
and this inequality turns into equality if and only if $P$ is lattice-equivalent to $\conv\{(0,0), (h,0), (0,1)\}.$ 
\end{corollary}
\begin{remark} It was shown in~\cite{Arnold} that any convex lattice  polygon $P\subset\mathbb{R}^2$ has a lattice-equivalent copy inside a square of size $36 A(P)$.
The corollary strengthens this result replacing the constant of 36 with 2.
\end{remark}
\begin{proof}[Proof of Corollary~\ref{C:lss(P)2A}]
If $\ls(P)=l$ then $P\subset l\Delta\subset [0,l]^2$ and we have $\lss(P)\leq l=\ls(P)$. Hence the result of Theorem~\ref{T:ls(P)2A} implies that $A(P)\geq  \frac{1}{2}\ls(P)\geq \frac{1}{2}\lss(P)$.
The bound is attained at $P$ for which the bound of Theorem~\ref{T:ls(P)2A} is attained and also $\ls(P)=\lss(P)$. We conclude  that such $P$ are lattice-equivalent to $\conv\{(0,0), (h,0), (0,1)\}$. 
\end{proof}

\section{Classification of minimal  lattice polygons $P$ of fixed  $\lss(P)$.}

\begin{definition} We say that that a lattice polygon $P$ with $\lss(P)=h$ is minimal if there is no lattice polygon $P'$ properly contained in $P$ such that $\lss(P')=h$.
\end{definition}

In this section we will classify all the minimal lattice polygons $P$ of fixed lattice size $\lss(P)=h$. This classification will provide an alternative proof for Corollary~\ref{C:lss(P)2A}.

\begin{proposition}\label{P:triangle} For  integers $a,b\in [1,h-1]$ define $T=\conv\{(0,0), (a,h), (h,b)\}$, as depicted in Figure~\ref{F:triangle-in-square}. Then  $\lss(T)=h$. Such $T$ is minimal if and only if $a+b\geq h$.
\end{proposition}
\begin{figure}[h]
\begin{center}
\includegraphics[scale=.65]{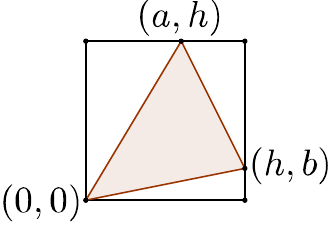} 
\caption{Proposition~\ref{P:triangle}}
\label{F:triangle-in-square}
\end{center}
\end{figure}

\begin{proof}
The first claim follows from Proposition~\ref{P:touch}.
To prove the second claim, suppose first that $a+b\geq h$. We need to show that for any lattice polygon $P$ properly contained in $T$ we have $\lss(P)<h$. 
We have 
$$\w_{(1,-1)}(T)=(h-b)-(a-h)=2h-a-b\leq h.
$$ 
Note that if we drop $(a,h)$ from $T$ to get to $P$ (see Definition~\ref{D:drop}) then $\w_{(1,-1)}(P)< h$. Also, $\w_{(0,1)}(P)< h$
and hence $\begin{bmatrix}1&-1\\0&1\end{bmatrix}$ composed with a lattice translation maps $P$ inside  $[0,h-1]^2$. The case of dropping $(h,b)$ is similar.
 If we drop $(0,0)$ then $P\subset [0,h-1]^2$. 

Next assume that $a+b\leq h-1$. If we drop $(h,b)$ to get from $T$ to $P$ then $(h-1,b)$ is in $P$ and we get $\w_{(1,0)}(P)=h-1$,  $\w_{(0,1)}(P)=h$, 
$$\w_{(1,-1)}(P)=h-1-b-(a-h)=2h-1-a-b\geq h.
$$
Also, $\w_{(1,1)}(P)\geq h+a>h$ and hence the standard basis is reduced and by Theorem~\ref{T:reduced_computes} we have  $\lss(P)=h$, so $T$ is not minimal.
\end{proof}

\begin{proposition}\label{P:quad}  For integers $a,b,c,d\in [1,h-1]$ define $$Q=\conv\{(a,0), (0,b), (h,h-c), (h-d,h)\},$$ 
as depicted in Figure~\ref{F:quad-in-square}. Then we have $\lss(Q)=h$. Such $Q$ is minimal if and only if 
$$
\min\{a,b\}+\min\{c,d\}>h\ \ {\rm or}\ \ \max\{a,c\}+\max\{b,d\}<h.
$$
Furthermore, if $Q$ satisfies one of these inequalities it cannot satisfy the other. Also, if $Q$ satisfies one of them, it is lattice-equivalent to a quadrilateral that satisfies the other.
\end{proposition}
\begin{figure}[h]
\begin{center}
\includegraphics[scale=.65]{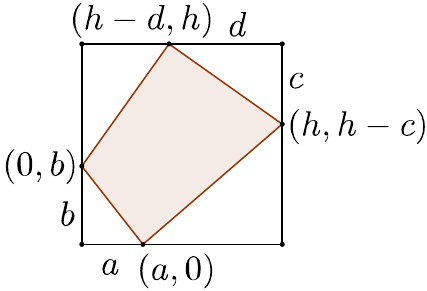} 
\caption{Proposition~\ref{P:quad}}
\label{F:quad-in-square}
\end{center}
\end{figure}
\begin{proof} Since $P$ touches all four sides of $[0,h]^2$,  by Proposition~\ref{P:touch}, we have $\lss(P)=h$.
Now assume  that one of the two inequalities is satisfied. Then
$$\w_{(1,1)}(Q)=\max\{2h-c,2h-d\}-\min\{a,b\}=2h-(\min\{a,b\}+\min\{c,d\})<h
$$
or 
$$\w_{(1,-1)}(Q)=\max\{a,c\}+\max\{b,d\}<h.
$$
If we drop one of the vertices from $Q$ to get $P$, we would also have $\w_{(1,0)}(P)<h$ or $\w_{(0,1)}(P)<h$ and hence we would be able to use one of $\begin{bmatrix}1&0\\1&\pm 1\end{bmatrix}$, $\begin{bmatrix}0&1\\1&\pm 1\end{bmatrix}$ and a lattice translation to fit $P$ into a smaller square, so $Q$ is minimal.

If neither of the two inequalities holds, we have $\w_{(1,1)}(Q)\geq h$ and $\w_{(1,-1)}(Q)\geq h$. If both of these inequalities are strict, then after we drop a vertex to get from $Q$ to $P$ we would have $\w_{(1,1)}(P)\geq h$ and 
$\w_{(1,-1)}(P)\geq h$, so the standard basis would still be reduced. We would also have either $\w_{(1,0)}(P)=h$ or $\w_{(0,1)}(P)=h$, so we can conclude that $\lss(P)=h$ and hence $Q$ is not minimal.

A similar argument works if we have $\w_{(1,1)}(Q)=h$ and $\w_{(1,-1)}(Q)\geq h$, or $\w_{(1,1)}(Q)\geq h$ and $\w_{(1,-1)}(Q)=h$. We would  need to drop a vertex that does not change the width in the direction $(1,1)$ in the first case and in the direction of $(1,-1)$ in the second.

If we have $\w_{(1,1)}(Q)=h$ and $\w_{(1,-1)}(Q)=h$ then $\max\{a,c\}+\max\{b,d\}=h$ and $\min\{a,b\}+\min\{c,d\}=h$. Since we have $\min\{a,b\}\leq \max\{b,d\}$ and $\min\{c,d\}\leq \max\{a,c\}$ we can conclude
$\min\{a,b\}= \max\{b,d\}$ and $\min\{c,d\}= \max\{a,c\}$. Hence we get $a=b=c=d=h/2$, and then  $Q$ contains a horizontal segment of lattice length $h$, so $Q$ is not minimal. 

Note that $Q$ cannot satisfy both inequalities since then we would get $$h<\min\{a,b\}+\min\{c,d\}\leq\max\{a,c\}+\max\{b,d\}<h.$$

If $Q$ satisfies the second inequality then the transformation $(x,y)\mapsto (h-x,y)$ maps $Q$ to 
$\conv\{(h-a,0), (0,h-c), (h,b), (d,h)\}\subset [0,h]^2,$
and its parameters $a'=h-a$, $b'=h-c$, $c'=h-b,d'=h-d$ satisfy
\begin{align*}\min\{a',b'\}+\min\{c',d'\}&=\min\{h-a,h-c\}+\min\{h-b,h-d\}\\
&=2h-\max\{a,c\}-\max\{b,d\}>h.
\end{align*}
\end{proof}

\begin{Theorem}\label{T:minimal} Let $P\subset\mathbb{R}^2$ be a minimal lattice polygon with $\lss(P)=h$. Then $P$ is lattice-equivalent to one of the following:
\begin{itemize}
\item[(a)] The segment $I=[(0,0),(h,0)]$;
\item[(b)]  The triangle $T=\conv((0,0), (a,h), (h,b))$, where $a$ and $b$ are integers that satisfy $a,b\in[1,h-1]$ and  $a+b\geq h$; 
\item[(c)]  The quadrilateral $Q=\conv\{(a,0), (0,b), (h,h-c), (h-d,h)\}$,  where $a, b, c,$ and $d$ are integers that satisfy $a,b,c,d\in[1,h-1]$ and $\min\{a,b\}+\min\{c,d\}> h$.
\end{itemize}
\end{Theorem}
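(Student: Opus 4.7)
The proof splits on $\dim P$. If $\dim P = 1$, a unimodular rotation sends the primitive direction of $P$ to $(1, 0)$ and a translation gives $P = [(0, 0), (k, 0)]$, where $k$ is the lattice length of $P$; then $\lss(P) = k$ forces $k = h$, and dropping either endpoint strictly shortens the segment, so $P$ is minimal and of type (a).

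For the rest of the proof assume $\dim P = 2$. The central step is to produce a basis $(u_1, u_2)$ of $\Z^2$ with $\w_{u_1}(P) = \w_{u_2}(P) = h$; applying the corresponding unimodular transformation and a translation then places $P$ inside $[0, h]^2$ with $P$ touching all four sides, which brings Proposition~\ref{P:touch} into play. I would begin with a reduced basis for $P$: by Theorem~\ref{T:reduced_computes} one has $\w_{(0, 1)}(P) = h$ and $\w_{(1, \pm 1)}(P) \geq h$. If $\w_{(1, 0)}(P) = h$ the standard basis itself is the required one; if one of $\w_{(1, 1)}(P), \w_{(1, -1)}(P)$ equals $h$, then $((0, 1), (1, \pm 1))$ is the required basis. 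The only remaining case is $\w_{(1, 0)}(P) < h$ together with $\w_{(1, \pm 1)}(P) > h$ strictly. I would argue that this configuration is incompatible with minimality: $P$ sits in $[0, w] \times [0, h]$ with $w < h$ and touches all four sides of this rectangle, and the strict inequalities give enough slack to drop a carefully chosen vertex of $P$ (typically a corner of $[0, w] \times [0, h]$ that is a vertex of $P$, or a vertex not extremal in either direction $(1, \pm 1)$) and obtain a proper lattice subpolygon $P' \subsetneq P$ which still has the standard basis reduced and $\w_{(0, 1)}(P') = h$, so $\lss(P') = h$ by Theorem~\ref{T:reduced_computes}. Establishing this ``bad regime is impossible'' step is the main obstacle of the proof.

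Once $P \subset [0, h]^2$ touches all four sides, Proposition~\ref{P:touch} makes dropping arguments clean: any lattice polygon $P' \subsetneq P$ that still touches all four sides of $[0, h]^2$ also has $\lss(P') = h$. Consequently every vertex of $P$ must lie on $\partial [0, h]^2$ (dropping an interior lattice vertex preserves both coordinate widths, contradicting minimality) and $P$ has at most one vertex on each open side of $[0, h]^2$ (two vertices on the same open side would allow dropping the inner one). A case analysis on how many corners of $[0, h]^2$ are vertices of $P$ then finishes the classification. Zero corners forces $P$ to have exactly one vertex on each open side, so $P$ is a quadrilateral of the form in Proposition~\ref{P:quad}, giving case~(c) with the inequality $\min\{a, b\} + \min\{c, d\} > h$. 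Exactly one corner of $[0, h]^2$, say $(0, 0)$, being a vertex forces $P$ to be a triangle with its other two vertices on $x = h$ and $y = h$ respectively, hence of the form in Proposition~\ref{P:triangle} and giving case~(b) with $a + b \geq h$.

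Finally, if $P$ has two or more corners of $[0, h]^2$ as vertices, it cannot be minimal. If two opposite corners, say $(0, 0)$ and $(h, h)$, both lie in $P$, the lattice segment $[(0, 0), (h, h)]$ has lattice length $h$, lies properly in $P$ (since $P$ is two-dimensional), and has $\lss = h$ by case~(a), contradicting minimality. If two adjacent corners, say $(0, 0)$ and $(h, 0)$, both lie in $P$, the edge of $[0, h]^2$ between them is a lattice segment of length $h$ properly inside $P$, giving the same contradiction. Three or four corners yield even more such segments, so this rules out all remaining configurations.
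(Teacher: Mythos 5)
Your skeleton is the same as the paper's: pass to a reduced basis, note that $\lss(P)=h$ forces the larger coordinate width to equal $h$ and $\w_{(1,\pm 1)}(P)\geq h$, shear via $\bigl(\begin{smallmatrix}1&0\\1&\pm1\end{smallmatrix}\bigr)$-type maps when a diagonal width equals $h$ so that $P$ touches all four sides of $[0,h]^2$, and then classify the all-four-sides configurations by dropping lattice points and invoking Propositions~\ref{P:touch}, \ref{P:triangle} and~\ref{P:quad}. That last stage of yours (vertices forced onto $\partial[0,h]^2$, at most one per open side, case analysis on how many corners of the square are vertices, opposite or adjacent corners producing a segment of type (a) inside $P$) is correct and in fact spelled out in more detail than in the paper. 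The problem is the step you yourself flag as ``the main obstacle'': the case where the second coordinate width is strictly less than $h$ and both $\w_{(1,1)}(P)>h$ and $\w_{(1,-1)}(P)>h$. You assert this regime is incompatible with minimality but give only a heuristic (``drop a corner of $[0,w]\times[0,h]$ that is a vertex of $P$, or a vertex not extremal in either diagonal direction''). That is not a proof, and as stated the recipe can fail: such a vertex need not exist, and even when it does, dropping it may destroy $\w$ in the long direction (the dropped point can be the only lattice point of $P$ attaining the relevant extremum) or push a diagonal width below $h$, after which Theorem~\ref{T:reduced_computes} no longer certifies $\lss(P')=h$.

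This case is precisely where the paper does its real work, and the argument there is not routine. With the long direction taken as $(1,0)$: if $P$ contains all of $[(0,0),(h,0)]$ it properly contains a segment of type (a), so it is not minimal; otherwise one may assume $(h,0)\notin P$ and one drops the rightmost lattice point $(a,0)$ of $P$ on the bottom edge. The quantitative heart is that for $0<a<h$ the point $(a,1)$ lies in $P$, so each diagonal width decreases by at most $1$; combined with the strict inequalities this keeps $\w_{(1,\pm1)}(P')\geq h$ while $\w_{(1,0)}(P')=h$ is untouched, whence $\lss(P')=h$ and $P$ is not minimal. When $a=0$ a different point must be dropped (the leftmost lattice point $(b,c)$ on the top line of $P$), with separate treatment of $c=1$, $b=h$ (excluded because they would force $\w_{(1,-1)}(P)\leq h$) and of the exceptional point $(b,c)=(0,1)$. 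Without this analysis your argument only classifies minimal polygons that happen to touch all four sides of some unimodular image of $[0,h]^2$; the content of the theorem is that every minimal $P$ can be brought into that position. So: right route, sound endgame, but the central reduction is missing rather than merely compressed.
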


\begin{proof} We can assume that $P\subset [0,h]^2$. Suppose first  that $P$ has lattice points on all four sides of $[0,h]^2$. One way this can happen is  when $P$ contains a segment connecting two opposite vertices of the square. 
Then, since  by Example~\ref{E:ex-I} we have $\lss(I)=h$ and $P$ is minimal, we conclude that $P$ is lattice-equivalent to $I$. Next, $P$ could be  a lattice triangle, one of whose vertices is a vertex of $[0,h]^2$ and two other vertices are on the adjacent sides of the square.
In this case, by Proposition~\ref{P:triangle}, $P$ is lattice-equivalent to $T=\conv((0,0), (a,h), (h,b))$ with $a,b\in[1,h-1]$ and  $a+b\geq h$. Finally, $P$ could be a quadrilateral  with exactly one vertex on each side of $[0,h]^2$ and, in this case, by Proposition~\ref{P:quad}, $P$ is lattice-equivalent to $$Q=\conv((a,0), (0,b), (h,h-c), (h-d,h))$$ with integers $a,b,c,d\in[1,h-1]$ that satisfy $\min\{a,b\}+\min\{c,d\}>h$.

Suppose next that the standard basis is reduced and hence $P\subset [0,h]^2$. If $P$ touches all four sides of $[0,h]^2$, we are done by the above. If $P$ touches only three sides, we can assume, switching the basis vectors,   that $\w_{(1,0)}(P)=h$, $\w_{(0,1)}(P)<h$, and also $\w_{(1,\pm 1)}(P)\geq h$. If $\w_{(1,1)}(P)=h$ or $\w_{(1,-1)}(P)=h$ we can use one of  $\begin{bmatrix}1&0\\ 1&\pm 1\end{bmatrix}$ to reduce to the case of $P$ touching all four sides of $h\square$. Hence we can assume that $\w_{(1,\pm 1)}(P)>h$. If $P$ contains the entire segment $I=[(0,0),(h,0)]$, by the minimality of $P$ and by Example~\ref{E:ex-I} we have $P=I$. Otherwise, we can assume that $(h,0)\not\in P$. Let  $(a,0)$ with $a<h$ be the rightmost point of $P$ in  $[(0,0),(h,0)]$ and assume that $a>0$. We drop $(a,0)$ to get from $P$ to $P'$, see the first diagram in Figure~\ref{F:three-sides}. Since $0<a<h$ we have $(a,1)\in P$ and hence the width in the directions $(1,\pm 1)$ could drop by at most 1. Hence we have  $\w_{(1,0)}(P')=h$,
$\w_{(1,\pm 1)}(P')\geq h$ and we can conclude that $\lss(P')=h$, so $P$ is not minimal.

\begin{figure}[h]
\begin{center}
\includegraphics[scale=.65]{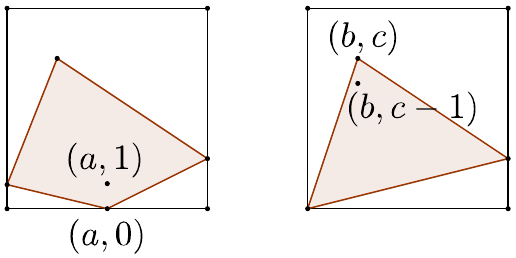} 
\caption{$P$ touches three sides of $[0,h]^2$}
\label{F:three-sides}
\end{center}
\end{figure}

Finally, let $a=0$. Suppose that the highest point of $P$ is on the line $y=c$ and let $(b,c)$ be the leftmost point of $P$ on this line. We then drop $(b,c)$ to get from $P$ to $P'$. If $c>1$ and $b< h$ then $(b,c-1)\in P$ and, as above, we conclude that $P$ is not minimal. If $c=1$ or $b=h$, with the exception of the case $(b,c)=(0,1)$, we have $P\subset\conv\{(0,0), (h,0), (h,h)\}$, but then $$\w_{(1,-1)}(P)\leq \w_{(1,-1)}\conv\{(0,0), (h,0), (h,h)\}=h.$$
If $(b,c)=(0,1)$ then $(h,1)\in P$ and $P$ is not minimal.
\end{proof}

This classification leads to an alternative argument for Corollary~\ref{C:lss(P)2A}.  If $P$ is of nonzero area and contains a lattice segment of lattice length $h$,  then $A(P)\geq h/2$ and in this case this inequality turns into equality if and only if $P$ is lattice-equivalent to $\conv\{(0,0), (h,0), (0,1)\}$.
It remains to show that the strict form of this inequality  holds  for triangles $T$ and quadrilaterals $Q$ from Theorem~\ref{T:minimal}.
For triangle $T$, we get $A(T)=(h^2-ab)/2> h/2$ since $ab< h(h-1)$ as $a,b\leq h-1$.

For quadrilateral $Q$, the inequality $2A(Q)> h$ rewrites as
$$
2h^2-ab-cd-(h-a)(h-c)-(h-b)(h-d)=(h-a)(b+c)+h(a+d)-d(b+c)>h\\
$$

Reflecting in the line $y=x$, if necessary, we can assume that $a+d\geq b+c$, so 
$$h(a+d)-d(b+c)\geq (h-d)(b+c)\geq (b+c).$$ 
Adding this up with  $(h-a)(b+c)\geq b+c$ we get
\begin{align*}
h(a+d)-d(b+c)+(h-a)(b+c)\geq 2(b+c)\geq 2(\min\{a,b\}+\min\{c,d\})>2h, 
\end{align*}
and this completes the argument.

\subsection*{Acknowledgments} 
We are grateful to Gennadiy Averkov for pointing us to~\cite{TothMakai} and for explaining that Theorem~\ref{T:whbound} is a corollary of a bound by Fejes-T\'oth and Makai. 
We also would like no thank Ivan Soprunov for reading an early version of this manuscript and providing many helpful comments.

\end{document}